\title{\textbf{$K_0$-invariance of the completely faithful property of Iwasawa modules}}
\author{Tamas Csige \thanks{The author is a PhD student at E\"otv\"os Lor\'and University, Budapest}}
\newcommand{\defeq}{\mathrel{\mathop:}=}
\newtheorem{thm}{Theorem}[subsection]
\newtheorem{pro}[thm]{Proposition}
\newtheorem{lem}[thm]{Lemma}
\newtheorem{rem}[thm]{Remark}
\theoremstyle{definition}
\begin{document}
\vspace{2cm}
\maketitle
\vspace{1cm}
\begin{abstract} \noindent Let $H$ be a compact $p$-adic analytic group without torsion element, whose Lie algebra is split semisimple and $\mathfrak{N}_H(G)$ be the full subcategory of the category of finitely generated modules over the Iwasawa algebra $\Lambda_G$ that are also finitely generated as $\Lambda_H$-modules, where $G = \mathbb{Z}_{p} \times H$. We show that if the class of a module $N$ in the Grothendieck group of $\mathfrak{N}_H(G)$ equals to the class of a completely faithful module, then $q(N)$ is also completely faithful, where $q(N)$ denotes the image of $N$ via the quotient functor modulo the full subcategory of pseudonull modules. We also generalize a Theorem of Konstantin Ardakov characterizing the completely faithful property to the case of more general $p$-adic Lie groups.
\vspace{0.75cm}

\noindent \textbf{Keywords:} Grothendieck group, Iwasawa algebra, completely faithful module

\noindent \textbf{MSC:}  11R23; 19A31; 22E35
\end{abstract}
\vspace{0.75cm}
\section{Introduction} In \cite{C} the authors state the $GL_2$ main conjectures for elliptic curves over $\mathbb{Q}$. For such an elliptic curve set $F_{\infty} = \mathbb{Q}(E[p^{\infty}])$, i.e. the extension of $\mathbb{Q}$ with the coordinates of the $p$-division points od $E$. By Weil pairing, $\mathbb{Q}(\mu_{p^{\infty}}) \subset F_{\infty}$, hence $F_{\infty}$ contains $\mathbb{Q}^{\textnormal{cyc}}$. Now set $G = \textnormal{Gal}(F_{\infty}/\mathbb{Q})$ and $H = \textnormal{Gal}(F_{\infty}/\mathbb{Q}^{\textnormal{cyc}})$, we have that $Z = G/H \simeq \mathbb{Z}_p$. One is particulary interested in the Pontrjagin-dual of the Selmer group of $E$, i.e. $X(E/F_{\infty}) = \textnormal{Hom}(\textnormal{Sel}(E/F_{\infty}),\mathbb{Q}_p/\mathbb{Z}_p)$. There is an important left and right Ore set in $\Lambda_G$, $S^{*}$ and one can define the category of finitely generated $S^*$-torsion $\Lambda_G$-modules $\mathfrak{M}_H(G)$ (for details see \cite{C} Chapter $2$).The first conjecture in \cite{C} states that under suitable assumtions, $X(E/F_{\infty})$ is an object of  $\mathfrak{M}_H(G)$. It also can be shown that a module $M \in \mathfrak{M}_H(G)$ if and only if $M/M(p)$ is finitely generated over $\Lambda_H$, where $M(p)$ denotes the $p$-primary submodule of $M$. Now we see that the category $\mathfrak{N}_G(H)$ contains all the quotient modules of the form $M/M(p)$, so it gives us a functor $F$, from the category $\mathfrak{M}_H(G)$ to $\mathfrak{N}_H(G)$ and it is also true that $\mathfrak{N}_H(G)$  is a full subcategory of $\mathfrak{M}_H(G)$ and so we have a natural functor $\mathfrak{N}_H(G) \rightarrow \mathfrak{M}_H(G)$ and it is exact. Hence it induces a map $\varphi:K_0(\mathfrak{N}_H(G)) \rightarrow K_0(\mathfrak{M}_H(G))$. If $G$ has no element of order $p$,  theorem $2.1$ in \cite{BV} shows that $K_0(\mathfrak{N}_H(G))$ is a direct summand of $K_0(\mathfrak{M}_H(G))$. 
So we have a map in the reverse direction and it is actualy induced by the functor $F$ defined above.
\bigskip

\noindent If we assume again that $G$ has no element of order $p$, then mainly as a consequence of Quillen's theorem on localization sequence, we have a  map \[ \partial_G : K_1((\Lambda_{G})_{S^*}) \rightarrow K_0(\mathfrak{M}_H(G)).\] It was shown in \cite{C} that this map is surjective.
One can define the characteristic element of a module $M$ as an inverse image of $[M]$ with respect to this surjective map, so if the conjecture stated above is true, we can define the characteristic element of the dual of the Selmer group of $E$. The main conjecture states that if other conjectures hold (one is what we already mentioned and the other is that the $p$-adic $L$-function, $\mathcal{L}_E$ exists in $K_1((\Lambda_{G})_{S^*})$), then $\mathcal{L}_E$ is a characteristic element of the dual, i.e. of $X(E/F_{\infty})$.
\newline Knowing that the competely faithful property is $K_0$-invariant in $K_0(\mathfrak{N}_H(G))$ in the sense of above can bring us closer to determine the completely faithful property via the characteristic element  in $K_1(\Lambda(G)_{S*})$. In \cite{Z} the authors find examples where the dual of the Selmer group is completely faithful. The question whether or not this is true for any ellipitic curve was raised in \cite{CS}.
\newline As a side effect of our investigation we give a slight generalization of a theorem of Konstantin Ardakov (see \cite{Ar1} Theorem $1.3$), namely instead of $G=Z \times H$ where $H$ is an open subgroup without torsion element and its Lie-algebra is split, semisimple we let its Lie-algebra be the direct product  of a split, semisimple and an abelian Lie-algebra.  

\section{Preliminaries}
\subsection{Iwasawa algebras.} \noindent Let $p$ be a prime number. We will work with modules over Iwasawa algebras
\begin{center} $\Lambda_G \defeq \varprojlim_{N \lhd_o G} \mathbb{Z}_p[G/N]$
\end{center} 
where $G$ is a compact $p$-adic analytic group.
\vspace{0.5cm}

 \noindent Iwasawa theory for elliptic curves in arithmetic geometry provides the main motivation for the study of Iwasawa algebras, for example when G is a certain subgroup of the $p$-adic analytic group $\textnormal{Gl}_2(\mathbb{Z}_p)$. Homological and ring-theoretic properties of these Iwasawa algebras are useful for understanding the structure of the Pontryagin dual of Selmer groups and other modules over the Iwasawa algebras. For more information, see for example \cite{AB} or \cite{CS}.
\vspace{0.5cm}
\subsection{\textnormal{\textbf{Fractional ideals and prime c-ideals}}}
\subsubsection{\textnormal{\textbf{Fraction Ideals}}}\noindent Let $R$ be a Noetherian domain, then it is well-known that $R$ has a skewfield of fractions, $Q(R)$. Recall that a right $R$-submodule $I$ of $Q(R)$ is called fractional right $R$-ideal if it is non-zero and there is a  $q \in Q(R)$, such that $q\neq 0$ and $I \subseteq qR$.  One can define fractional left $R$-ideals similarly. When it is obvious what ring we mean, we just call it a fractional right or left ideal. If we have a fractional right ideal $I$, one can define its inverse \[ I^{-1} := \{ q \in Q(R) \ | \ qI \subseteq R\}\]
\vspace{0.3cm}
Of course we can define the inverse for fractional left ideals. Let us consider the dual of $I$, i.e. $I^* = \textnormal{Hom}_{R}(I,R)$. This is a left $R$-module and there is a natural isomorphism $u:I^{-1} \rightarrow I^*$ that sends an element $i \in I^{-1}$ to the homomorphism induced by left multiplication by $i$. The following lemma is useful to compute $I^{-1}$.
\begin{lem}\textnormal{ Let $R$ be a Noetherian domain and $I$ be a non-zero right ideal of $R$. Then $I^{-1}/R \simeq \textnormal{Ext}^1(R/I,R)$.}
\end{lem}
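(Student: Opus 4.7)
The plan is to derive the isomorphism from the long exact sequence obtained by applying $\Hom_R(-,R)$ to the short exact sequence of right $R$-modules
\begin{equation*}
0 \longrightarrow I \longrightarrow R \longrightarrow R/I \longrightarrow 0.
\end{equation*}
This yields
\begin{equation*}
0 \to \Hom_R(R/I,R) \to \Hom_R(R,R) \to \Hom_R(I,R) \to \Ext^1_R(R/I,R) \to \Ext^1_R(R,R).
\end{equation*}
So the first step is to identify the four outer terms and argue that the extreme ones vanish.

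Second, I would handle the vanishing terms. We have $\Ext^1_R(R,R)=0$ because $R$ is free, hence projective, so the sequence terminates at the $\Ext^1$ after $\Hom_R(I,R)$. For $\Hom_R(R/I,R)$: any $R$-homomorphism $\varphi: R/I \to R$ satisfies $\varphi(\overline{1})\cdot a = \varphi(\overline{a}) = 0$ for every $a \in I$; picking a nonzero $a \in I$ (which exists since $I\neq 0$) and using that $R$ is a domain gives $\varphi(\overline{1})=0$, hence $\varphi=0$.

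Third, I would use the standard identifications $\Hom_R(R,R)\simeq R$ (via $\varphi \mapsto \varphi(1)$) and $\Hom_R(I,R)= I^* \simeq I^{-1}$, where the second isomorphism $u$ is the one recalled just before the lemma. Under these identifications, the map $R \to I^{-1}$ induced by restriction along $I \hookrightarrow R$ sends $r \in R$ to left multiplication by $r$ on $I$, which corresponds via $u$ to the element $r \in I^{-1}$ (note $r \in R \subseteq I^{-1}$ automatically, since $rI \subseteq R$). Thus this map is precisely the inclusion $R \hookrightarrow I^{-1}$.

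Putting these facts back into the long exact sequence yields a short exact sequence
\begin{equation*}
0 \longrightarrow R \longrightarrow I^{-1} \longrightarrow \Ext^1_R(R/I,R) \longrightarrow 0,
\end{equation*}
which gives the claimed isomorphism $I^{-1}/R \simeq \Ext^1_R(R/I,R)$. There is no real obstacle here; the only point that needs care is checking that the natural map $R \to I^{-1}$ appearing in the long exact sequence really is the inclusion (rather than something off by a sign or twist), so that the cokernel is literally $I^{-1}/R$ and not merely isomorphic to it as an abstract quotient.
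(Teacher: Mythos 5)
The paper states this lemma without giving a proof, so there is no in-text argument to compare against; your argument is the standard one and it is correct. Applying $\Hom_R(-,R)$ to $0\to I\to R\to R/I\to 0$, using projectivity of $R$ to kill $\Ext^1_R(R,R)$, using that $R$ is a domain and $I\neq 0$ to kill $\Hom_R(R/I,R)$, and identifying $\Hom_R(R,R)\simeq R$ and $\Hom_R(I,R)\simeq I^{-1}$ via the isomorphism $u$ recalled in the paper, does yield the short exact sequence $0\to R\to I^{-1}\to\Ext^1_R(R/I,R)\to 0$, and your check that the connecting map $R\to I^{-1}$ is literally the inclusion is exactly the point one must verify so that the cokernel is $I^{-1}/R$ on the nose. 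The only ingredient you take on faith is that $u:I^{-1}\to I^*$ is an isomorphism, but the paper explicitly provides this, so no gap remains.
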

\subsubsection{\textnormal{\textbf{Prime c-ideals}}}
\noindent Let $I$ be a fractional right ideal. The reflexive closure of $I$ is $\overline{I} :=(I^{-1})^{-1}$. This is also a fractional right ideal and it contains $I$. $I$ is called reflexive if it is the same as its reflexive closure, i.e. $I = \overline{I}$. One can say equivalently that $I \rightarrow (I^*)^*$ is an isomorphism. The next proposition will be quite useful.
\begin{pro} \textnormal{Let $R \hookrightarrow S$ be a ring extension such that $R$ is noetherian and $S$ is flat as a left and right $R$-module. Then there is a natural isomorphism \[ \psi_M^i : S \otimes_R \textnormal{Ext}_R^i(M,R) \rightarrow \textnormal{Ext}_S^i(M \otimes_RS,S)\]
for all finitely generated right $R$-modules and all $i \geq 0$. A similar statement holds for left $R$-modules. If moreover $S$ is a noetherian domain, then 
\newline \noindent (i) $\overline{ I \cdot S} = \overline{I} \cdot S$ for all right ideals $I$ of $R$,
 \noindent (ii) if $J$ is a reflexive right $S$-ideal, then $I \cap R$ is a reflexive right $R$-ideal.} 
\end{pro}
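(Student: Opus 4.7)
The plan is to build the natural isomorphism $\psi^i_M$ first, then use it via the Lemma to prove (i); part (ii) is a quick consequence of (i).

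To construct $\psi^i_M$, I choose a resolution $P_\bullet \to M$ by finitely generated projective right $R$-modules, which exists because $R$ is Noetherian and $M$ is finitely generated. Since $S$ is flat as a left $R$-module, $P_\bullet \otimes_R S \to M \otimes_R S$ is again a projective resolution over $S$. For each finitely generated projective right $R$-module $P$, the standard map
\[
S \otimes_R \mathrm{Hom}_R(P, R) \longrightarrow \mathrm{Hom}_S(P \otimes_R S, S),\qquad s \otimes \varphi \mapsto \bigl(p \otimes t \mapsto s\,\varphi(p)\,t\bigr),
\]
is an isomorphism (true for $P = R$, where both sides are $S$, and hence by additivity for all finitely generated projectives). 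Applying this termwise to $P_\bullet$ and using flatness of $S$ as a right $R$-module to commute $S \otimes_R -$ with cohomology produces $\psi^i_M$; the same argument on the other side gives the analogous statement for finitely generated left $R$-modules.

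For part (i), the crux is the identity $S \cdot I^{-1} = (IS)^{-1}$ inside $Q(S)$. Combining the preceding Lemma (applied to the right $R$-ideal $I$ and to the right $S$-ideal $IS$) with $\psi^1_{R/I}$ and the identification $(R/I) \otimes_R S \cong S/IS$ produces a natural isomorphism
\[
\Psi \colon S \otimes_R (I^{-1}/R) \xrightarrow{\;\sim\;} (IS)^{-1}/S,\qquad s \otimes [q] \mapsto [sq].
\]
The explicit formula factors $\Psi$ as the surjection $S \otimes_R (I^{-1}/R) \twoheadrightarrow (S\cdot I^{-1})/S$ given by multiplication, followed by the inclusion $(S\cdot I^{-1})/S \hookrightarrow (IS)^{-1}/S$; the latter inclusion is genuine because for $q \in I^{-1}$, $s,s' \in S$ and $i \in I$ one computes $sq \cdot is' = s(qi)s' \in S$ since $qi \in R$. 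Because $\Psi$ is an isomorphism, both factors must be, and so $S \cdot I^{-1} = (IS)^{-1}$. The mirror identity $(S \cdot I^{-1})^{-1} = \overline{I} \cdot S$ follows by applying the left-module version of $\psi$ to the integral left $R$-ideal $I^{-1}c \subseteq R$ obtained by clearing denominators via any $c \in \overline{I} \setminus \{0\}$, and then dividing by $c$. Combining these,
\[
\overline{IS} \;=\; \bigl((IS)^{-1}\bigr)^{-1} \;=\; (S \cdot I^{-1})^{-1} \;=\; \overline{I} \cdot S.
\]

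For part (ii), let $J$ be a reflexive right $S$-ideal and set $I := J \cap R$. Since $I \subseteq R$ one has $R \subseteq I^{-1}$ and so $\overline{I} \subseteq R$; moreover $IS \subseteq JS = J$ gives $\overline{IS} \subseteq \overline{J} = J$, and (i) yields $\overline{I} \subseteq \overline{I}\cdot S = \overline{IS} \subseteq J$. Thus $\overline{I} \subseteq R \cap J = I$, proving that $I$ is reflexive. The main obstacle I expect is the naturality check underlying the explicit formula for $\Psi$---tracing the connecting homomorphism of $0 \to I \to R \to R/I \to 0$ through the flat base change and confirming compatibility with the left-multiplication identification $I^{-1} \xrightarrow{\sim} I^*$---together with the analogous mirror computation, which requires handling the fractional left ideal $I^{-1}$ by clearing denominators and verifying that the resulting identity descends correctly.
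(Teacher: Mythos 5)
The paper does not actually contain a proof of this statement: it explicitly omits the argument and refers to Ardakov--Wei--Whang, Proposition~1.2 (cited as \cite{Ar2}). So there is no in-text proof to compare against; what you have written is a reconstruction, and it is essentially correct. The construction of $\psi^i_M$ via a finitely generated projective resolution, the termwise isomorphism on finitely generated projectives, left flatness to preserve the resolution and right flatness to commute $S\otimes_R-$ past cohomology, is the standard argument and is stated with the flatness directions correctly matched to their roles. The derivation of (i) from the identity $S\cdot I^{-1}=(IS)^{-1}$, obtained by factoring $\Psi$ as a surjection onto $(S\cdot I^{-1})/S$ followed by the inclusion into $(IS)^{-1}/S$, is sound (and the inclusion is verified correctly: $sq\cdot is'=s(qi)s'\in S$ because $qi\in R$). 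The clearing-of-denominators trick for the mirror identity works: with $c\in\overline I\setminus\{0\}$, the left-ideal version of the same computation applied to $K=I^{-1}c\subseteq R$ gives $(SK)^{-1}=K^{-1}S$, and since $(I^{-1}c)^{-1}=c^{-1}\overline I$ and $(SI^{-1}c)^{-1}=c^{-1}(SI^{-1})^{-1}$, cancelling $c^{-1}$ yields $(S\cdot I^{-1})^{-1}=\overline I\cdot S$; combining the two identities gives $\overline{IS}=\overline I\cdot S$. Part (ii) is a short formal consequence.

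Two loose ends worth flagging. First, the one you yourself identify: you must check that under the identifications $\mathrm{Ext}^1_R(R/I,R)\cong I^{-1}/R$ and $\mathrm{Ext}^1_S(S/IS,S)\cong(IS)^{-1}/S$ (and $(R/I)\otimes_RS\cong S/IS$, which uses flatness), the abstract isomorphism $\psi^1_{R/I}$ really is $s\otimes[q]\mapsto[sq]$. This is genuinely a piece of the proof, not just bookkeeping, but it does go through: apply the naturality of $\psi^{\bullet}$ to $0\to I\to R\to R/I\to 0$ and chase the connecting homomorphisms; the point is that $\psi^0_I$ carries $s\otimes(\text{left mult.\ by }q)$ to left multiplication by $sq$ on $IS$, and the rest is a diagram chase. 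Second, in (ii) you silently assume $J\cap R\neq0$ (the typo $I\cap R$ in the paper's statement should indeed read $J\cap R$); if $J\cap R=0$ the assertion is vacuous or needs a convention, which is a minor point but should be acknowledged since the paper's own definition of a fractional ideal requires it to be nonzero.
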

\begin{proof} We omit the proof of this proposition. It can be found in \cite{Ar2} Proposition $1.2$.
\end{proof}
\noindent Another important result is the following
\label{P}\begin{pro}\textnormal{Let $R$ be a noetherian domain and $I$ be a proper $c$-ideal of $R$. If there is an element $x \in R$ such that $x$ is non-zero, central in $R$, $R/x \cdot R$ is a domain and $x \in I$ then $I = x \cdot R$.}
\end{pro}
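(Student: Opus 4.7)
\noindent The plan is to assume for contradiction that $\bar I := I/xR$ is nonzero in $A := R/xR$ and to derive a contradiction from the reflexivity of $I$. First, I would verify that $xR$ is itself reflexive: since $x$ is central and nonzero, one computes $(xR)^{-1} = x^{-1}R$ in $Q(R)$ and hence $((xR)^{-1})^{-1} = xR$. In particular $xR \subseteq I$ is an inclusion of reflexive two-sided ideals, and proving $I = xR$ reduces to showing $\bar I = 0$.

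\noindent My next step would be to compute $I^{-1}/R$ using Lemma 2.2.1, which gives $I^{-1}/R \simeq \Ext^1_R(R/I, R)$. Because $xR \subseteq I$, the module $R/I$ is a right $A$-module, equal to $A/\bar I$. Applying $\Hom_R(-, R)$ to the short exact sequence $0 \to \bar I \to A \to R/I \to 0$ and using that $\Hom_R(M, R) = 0$ for any $R$-torsion $M$ (as $R$ is a domain), I obtain an injection $\Ext^1_R(R/I, R) \hookrightarrow \Ext^1_R(A, R)$ whose image is the kernel of the restriction map $\Ext^1_R(A, R) \to \Ext^1_R(\bar I, R)$. The free resolution $0 \to R \xrightarrow{\cdot x} R \to A \to 0$ identifies $\Ext^1_R(A, R) \simeq A$; more generally, the Grothendieck change-of-rings spectral sequence
\[
\Ext^p_A\bigl(M, \Ext^q_R(A, R)\bigr) \Rightarrow \Ext^{p+q}_R(M, R)
\]
degenerates (since $\Ext^q_R(A, R) = 0$ for $q \neq 1$ and $\Ext^1_R(A, R) \simeq A$) to yield $\Ext^1_R(M, R) \simeq \Hom_A(M, A)$ for every right $A$-module $M$. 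Under these identifications the restriction map becomes $\Hom_A(A, A) \to \Hom_A(\bar I, A)$, which on $\Hom_A(A, A) \simeq A$ is left multiplication on $\bar I$ by elements of $A$. Its kernel is the left annihilator of $\bar I$ in $A$, and this vanishes whenever $\bar I \neq 0$ because $A$ is a domain.

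\noindent Combining these, $\Ext^1_R(R/I, R) = 0$, so $I^{-1} = R$ and therefore $(I^{-1})^{-1} = R$. But $I$ is a $c$-ideal, hence reflexive, so $I = (I^{-1})^{-1} = R$, contradicting the properness of $I$. Hence $\bar I = 0$, i.e.\ $I = xR$. The main obstacle is the bookkeeping of left versus right module structures required to identify the restriction map with honest left multiplication in $A$; the change-of-rings spectral sequence is the cleanest tool for this, and once it is in place the annihilator calculation in the noetherian domain $A$ is immediate.
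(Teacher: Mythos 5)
Your proof is correct, but it takes a noticeably more homological route than the direct argument of \cite{Ar1} Lemma $2.2$ that the paper cites for this proposition. Ardakov's argument is elementary and avoids $\Ext$ entirely: since $I$ is a proper reflexive ideal one has $I^{-1} \supsetneq R$, so choose $q \in I^{-1} \setminus R$; set $r := qx \in R$ (this lies in $R$ because $x \in I$), note $r \notin xR$ since $q \notin R$ and $x$ is central, and for any $i \in I$ compute $ri = qxi = x(qi) \in xR$, so $\bar r\,\bar I = 0$ in $A := R/xR$ with $\bar r \neq 0$; as $A$ is a domain this forces $\bar I = 0$, i.e.\ $I = xR$. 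Your argument reaches the same annihilator condition, but indirectly: you pass through Lemma $2.2.1$ ($I^{-1}/R \simeq \Ext^1_R(R/I,R)$), the long exact sequence for $0 \to \bar I \to A \to R/I \to 0$, and the degenerate change-of-rings spectral sequence giving $\Ext^1_R(-,R) \simeq \Hom_A(-,A)$ on $A$-modules, finally identifying $\Ext^1_R(R/I,R)$ with the left annihilator of $\bar I$ in $A$. All the steps check out (in particular the Rees-type isomorphism and the left/right bookkeeping are handled correctly), so this is a legitimate alternative proof, and it has the conceptual advantage of actually exhibiting the isomorphism $I^{-1}/R \simeq \{a \in A : a\bar I = 0\}$ rather than only extracting the one implication needed. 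The price is the spectral-sequence overhead for a fact that admits a three-line direct proof. Two minor remarks: the opening observation that $xR$ is reflexive is never used later; and you could shorten the homological part by bypassing the spectral sequence and just using the explicit two-term free resolution of $A$ to identify $\Ext^1_R(M,R)$ for $A$-modules $M$ by hand.
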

\begin{proof} For proof see for example \cite{Ar1} Lemma $2.2$.
\end{proof}
\vspace{0.3cm}
 \subsection{\textnormal{\textbf{Pseudo-null modules}}}
\noindent Let $R$ be an arbitrary ring and $M$ be an $R$-module. Recall from \cite{Ar2} (section $1.3$) that $M$ is called  pseudo-null if $\textnormal{Ext}_R^0(N,R) = \textnormal{Ext}_R^1(N,R) = 0$ for any submodule $N \subseteq M$. The category of pseudo-null modules $\mathcal{C}$ is a full subcategory of \textnormal{mod}(R), i.e. the category of finitely generated right $R$-modules and is "localizing" in the sense that it is a Serre subcategory, i.e. if  $0 \rightarrow A_1 \rightarrow A \rightarrow A_2 \rightarrow 0$ is a short exact sequence of right $R$-modules, then $A$ lies in $\mathcal{C}$ if and only if $A_1$ and $A_2$ lie in $\mathcal{C}$ and any $R$-module has a largest uniquesubmodule contained in $\mathcal{C}$. Let $R$ be a noetherian domain. Since $\mathcal{C}$ is a Serre subcategory we can consider the quotient category and the quotient functor \[q: \textnormal{mod}(R) \rightarrow \textnormal{mod}(R)/\mathcal{C}. \] 
\label{C}\subsection{Completely faithful modules}\noindent We recall from \cite{Ar2} what a completely faithful module is. One can consider the annihilator ideal of an object $M$ in the quotient category \[ \textnormal{Ann}(M) :=\sum \{ \textnormal{Ann}_R(N) \ | \ q(N) \simeq M\}\] and $M$ is said to be completely faithful if Ann($L$) = $0$ for any non-zero subquotient $L$ of $M$. Now let $G = H \times Z$ where $Z = \mathbb{Z}_p$ and $H$ a compact $p$-adic analytic group without torsion element, whose Lie algebra is split semisimple. Let mod($\Lambda_G$) be  the category of finitely generated left $\Lambda_G$-modules and consider $\mathfrak{N}_H(G)$, the full subcategory of all left $\Lambda_G$-modules that are finitely generated as $\Lambda_H$-modules. \newline \noindent Note that if $M \in \mathfrak{N}_H(G)$ then it has no pseudo-null submodules if and only if it has no $\Lambda_H$ torsion element.  Recall  \cite{Ar1} Theorem 1.3 for it is of particular importance:
\label{A}\begin{thm}\textnormal{Let $p \geq 5$ and $G$ as above. If $M$ has no non-zero pseudo-null submodules, then $q(M)$ is completely faithful if and only if $M$ is torsion-free over $\Lambda_Z$}.
\end{thm}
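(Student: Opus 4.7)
The plan is to prove the equivalence by establishing both implications, starting with the easier one. For the implication that complete faithfulness of $q(M)$ forces $M$ to be $\Lambda_Z$-torsion-free, I would argue by contrapositive. Suppose $M$ has non-trivial $\Lambda_Z$-torsion and let $T\subseteq M$ denote the $\Lambda_Z$-torsion submodule. Since $\Lambda_Z$ is central in $\Lambda_G$, $T$ is a $\Lambda_G$-submodule, and because $M$ has no non-zero pseudo-null submodules, $T$ itself is not pseudo-null, so $q(T)\neq 0$. As $T$ is finitely generated over $\Lambda_H$, choosing non-zero $\Lambda_Z$-annihilators of a finite generating set and multiplying them in the commutative ring $\Lambda_Z$ yields a single non-zero $z\in\Lambda_Z$ with $zT=0$. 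Then $z\in\mathrm{Ann}_{\Lambda_G}(T)\subseteq\mathrm{Ann}(q(T))$, exhibiting a non-zero subquotient of $q(M)$ with a non-trivial annihilator.

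For the converse implication I would proceed in two stages. First, for any prime element $x\in\Lambda_Z$, $\Lambda_Z$-torsion-freeness of $M$ yields a short exact sequence $0\to M\xrightarrow{x} M\to M/xM\to 0$. A dimension count over the Auslander-regular ring $\Lambda_G$, combined with the observation that any $M\in\mathfrak{N}_H(G)$ that is $\Lambda_Z$-torsion-free and has no pseudo-null submodules attains maximal Krull dimension $\dim H+1$, shows that $M/xM$ has Krull dimension at most $\dim H$ and is therefore pseudo-null. Applying the exact functor $q$, multiplication by $x$ becomes an automorphism of $q(M)$, and hence acts invertibly on every subquotient.

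Second, I would suppose for contradiction that some non-zero subquotient $L$ of $q(M)$ has $\mathrm{Ann}(L)\neq 0$, and pick a lift $N$ with $q(N)\cong L$ and $I:=\mathrm{Ann}_{\Lambda_G}(N)\neq 0$. Since $L$ is not pseudo-null, the Auslander--Cohen--Macaulay structure of $\Lambda_G$ provides an element $n\in N$ whose annihilator $\mathfrak{p}$ is a height-one prime of $\Lambda_G$, with $\Lambda_G\cdot n\cong\Lambda_G/\mathfrak{p}$ itself not pseudo-null (being of Krull dimension $\dim G$). The crucial input is the structural result that every non-zero prime $c$-ideal of $\Lambda_G$ is of the form $x\Lambda_G$ for a prime element $x\in\Lambda_Z$: this is obtained by combining Ardakov's earlier theorem that $\Lambda_H$ has no proper prime $c$-ideals (which uses split semisimplicity of the Lie algebra of $H$, torsion-freeness, and $p\geq 5$) with Proposition \ref{P} applied to the central inclusion $\Lambda_Z\hookrightarrow\Lambda_G$. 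Hence $\mathfrak{p}=x\Lambda_G$, and the class $q(n)\in L$ is a non-zero element satisfying $xq(n)=0$, contradicting the invertibility of $x$ on $L$ from the first stage.

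The main obstacle is precisely this structural description of height-one prime $c$-ideals of $\Lambda_G$: this is where the hypotheses on $H$ enter decisively, via Ardakov's vanishing theorem for proper prime $c$-ideals of $\Lambda_H$. Without it, $\Lambda_G$ could carry extra height-one prime $c$-ideals coming genuinely from $\Lambda_H$, capable of annihilating subquotients of $q(M)$ while leaving the $\Lambda_Z$-action untouched, which would break the argument entirely. A secondary technical point requiring care is verifying that the associated prime of $N$ can indeed be taken to be a prime $c$-ideal, which relies on the Auslander property of $\Lambda_G$ ensuring that height-one associated primes are reflexive.
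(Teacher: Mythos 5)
The paper does not itself prove this theorem; it is imported from Ardakov's \emph{J.~Inst.~Math.~Jussieu} paper (Theorem~1.3 there), and the only proof that appears in the present paper is that of the strict generalization in Section~4 (Theorem~4.1.1), which follows Ardakov's original strategy: invoke the Coates--Schneider--Sujatha structure theorem to write $q(M)=M_0\oplus M_1$ with $M_0$ completely faithful and $M_1$ locally bounded, and then show that $q(X)=M_1$, where $X\le M$ is the $\Lambda_Z$-torsion submodule, by producing a central element of $\Lambda_Z$ inside $\mathrm{Ann}(M_1)$ via the classification of prime $c$-ideals. Your proposal takes a genuinely different route. You never invoke the CSS direct-sum decomposition; instead you prove the easy direction via the $\Lambda_Z$-torsion submodule, and for the converse you show that multiplication by any prime $x\in\Lambda_Z$ induces an automorphism of $q(M)$ (a grade/dimension count over the Auslander-regular ring $\Lambda_G$, using that $M/xM$ is pseudo-null), whence $x$ acts invertibly on every subquotient in the Noetherian quotient category, and then derive a contradiction by extracting a prime $c$-ideal in $\Lambda_Z$ from the annihilator of a bounded lift. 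Both arguments hinge on exactly the same hard input --- the description of the proper prime $c$-ideals of $\Lambda_G$ (Ardakov's Theorem~4.3 and Proposition~4.5; Propositions~4.1.3 and~4.1.4 here) --- so your proof buys a cleaner logical skeleton at the cost of re-proving the boundedness structure that CSS hands over for free. One step needs tightening: you say the ``Auslander--Cohen--Macaulay structure of $\Lambda_G$ provides an element $n\in N$ whose annihilator $\mathfrak{p}$ is a height-one prime.'' In the non-commutative setting the annihilator of a single element is only a left ideal, so this is not the right formulation; the correct tool is Lemma~4.3(i) of Coates--Schneider--Sujatha (which is exactly what the paper's Section~4 proof cites): for a bounded $\Lambda_G$-torsion module with no non-zero pseudo-null submodule, $\mathrm{Ann}(q(N))$ is a prime $c$-ideal. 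With that substitution your outline is sound and gives a decomposition-free alternative to the argument appearing in the paper.
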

\vspace{0.5cm}
\subsection{The Grothendieck group of $\mathfrak{N}_H(G)$}
\noindent We will need  the definition of the Grothendieck group $K_0(\mathcal{A})$ of a skeletally small abelian category from \cite{B} (Definition $6.1.1$) and some results in connection with it. However we will have a module category $\mathfrak{N}_H(G)$ in which case $K_0(\mathfrak{N}_H(G))$ is the abelian group presented as having one generator $[A]$ for each isomorphism class of modules and one relation for every short exact sequence, i. e. $[A_2] = [A_1] + [A_3]$ in $K_0(\mathcal{A})$ whenever we have a short exact sequence of the form
\begin{center}
$\begin{CD}
0 @>>> A_1 @>>> A_2 @>>> A_3 @>>> 0
\end{CD}$
\end{center}
where $A_1, A_2, A_3 \in \mathfrak{N}_H(G)$. We will make use of a fact in K-theory that basically tells us how many relations we have whenever two modules have the same class in $K_0(\mathfrak{N}_H(G))$. \noindent  However we will state a more general version. For details see \cite{B} (Ex. $6.4.$)
\label{W}\begin{lem} \textnormal{ Let $\mathcal{A}$ be a small abelian category. If $[A_1] = [A_2]$ in $K_0(\mathcal{A})$ then there are short exact sequences in $\mathcal{A}$
\vspace{0,5cm}
\begin{center}
\label{1}$\begin{CD} 0 @>>> C @>>> K @>>> D @>>> 0 \\
0 @>>> C @>>> L @>>> D @>>> 0
\end{CD}$
\end{center}
\vspace{0,3cm}
such that $A_1 \oplus K = A_2 \oplus L$.}
\end{lem}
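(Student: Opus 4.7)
The plan is to exploit the explicit presentation of $K_0(\mathcal{A})$. Let $F$ be the free abelian group on isomorphism classes $[A]$ of objects of $\mathcal{A}$, and let $R \subseteq F$ be the subgroup generated by elements of the form $[A_2] - [A_1] - [A_3]$ arising from short exact sequences $0 \to A_1 \to A_2 \to A_3 \to 0$, so that $K_0(\mathcal{A}) = F/R$.

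First I would unpack the hypothesis $[A_1] = [A_2]$ in $K_0(\mathcal{A})$: this means $[A_1] - [A_2] \in R$, so we may write a finite signed expansion
\[
[A_1] - [A_2] = \sum_{i} \bigl([Y_i] - [X_i] - [Z_i]\bigr) - \sum_{j} \bigl([Y_j'] - [X_j'] - [Z_j']\bigr),
\]
where $0 \to X_i \to Y_i \to Z_i \to 0$ and $0 \to X_j' \to Y_j' \to Z_j' \to 0$ are short exact sequences in $\mathcal{A}$. Transposing the negative contributions to the opposite side produces an equality of two \emph{nonnegative} combinations in $F$.

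Second, since $F$ is free abelian on isomorphism classes, the equality of two nonnegative sums forces the two multisets of isomorphism classes to agree; equivalently, the direct sums of the listed objects on the two sides are isomorphic in $\mathcal{A}$. Setting $P := \bigoplus_i X_i$, $Q := \bigoplus_i Z_i$, $E := \bigoplus_i Y_i$ and $P' := \bigoplus_j X_j'$, $Q' := \bigoplus_j Z_j'$, $E' := \bigoplus_j Y_j'$, this translates to
\[
A_1 \oplus P \oplus Q \oplus E' \;\cong\; A_2 \oplus E \oplus P' \oplus Q'.
\]

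Third, I would repackage this isomorphism into the form demanded by the lemma. Put $C := P \oplus P'$ and $D := Q \oplus Q'$. Taking the direct sum of the split sequence $0 \to P \to P \oplus Q \to Q \to 0$ with the (possibly non-split) sequence $0 \to P' \to E' \to Q' \to 0$ yields a short exact sequence $0 \to C \to K \to D \to 0$ with $K := P \oplus Q \oplus E'$. Symmetrically, summing $0 \to P \to E \to Q \to 0$ with the split sequence $0 \to P' \to P' \oplus Q' \to Q' \to 0$ gives $0 \to C \to L \to D \to 0$ with $L := E \oplus P' \oplus Q'$. The displayed isomorphism is then precisely $A_1 \oplus K \cong A_2 \oplus L$, as required.

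The main obstacle is purely combinatorial: one must carefully split the signed expansion of relations into its positive and negative parts and invoke the freeness of $F$ on isomorphism classes to upgrade a formal equality of sums to a genuine isomorphism of direct sums. Once that bookkeeping is in place, the final step is bought cheaply by combining direct sums of existing short exact sequences with two split sequences to arrange for the common outer terms $C$ and $D$.
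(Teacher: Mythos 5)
The paper does not actually prove this lemma; it simply cites Weibel's K-book (Ch.\ II, Ex.\ 6.4) and uses the statement as a black box. Your argument is correct and is in fact the standard proof of that exercise: you write $[A_1]-[A_2]$ as a $\mathbb{Z}$-linear combination of the defining relations, separate positive and negative contributions to obtain an equality of two nonnegative formal sums in the free abelian group $F$ on isomorphism classes, invoke freeness of $F$ to upgrade that formal equality to an honest isomorphism of finite direct sums in $\mathcal{A}$, and then splice in two split short exact sequences so that both constructed extensions share the common outer terms $C = P \oplus P'$ and $D = Q \oplus Q'$. The bookkeeping is handled correctly, direct sums of short exact sequences in an abelian category are again short exact, and the degenerate case where one of the index sets is empty is harmless (the corresponding direct sums are zero objects and the associated sequence is trivial). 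Smallness of $\mathcal{A}$ is used only to make $K_0$ well defined and plays no further role, which is consistent with your write-up.
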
 
\vspace{0.5cm}
\subsection{The completely faithful property and $K_0$ invariance}
\label{CS}\begin{thm}\textnormal{ Let $p\geq 5$. Let $H$ be a torsion-free compact $p$-adic analytic group whose Lie algebra is split semisimple over $\mathbb{Q}_p$ and let $G=\mathbb{Z}_{p} \times H$. Let $M$, $N \in \mathfrak{N}_H(G)$, both $\Lambda_H$-torsion free and let $q(M)$ be completely faithful in the sense of \ref{C}. If  $[M] = [N]$ in $K_0(\mathfrak{N}_H(G))$ then $q(N)$ is also completely faithful.}
\end{thm}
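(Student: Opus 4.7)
\noindent The strategy is to use Theorem~\ref{A} to translate complete faithfulness into the purely ring-theoretic condition of being torsion-free over $\Lambda_Z$, and then to prove that this condition is inherited under the $K_0$-relation $[M]=[N]$. Since $M$ and $N$ are both $\Lambda_H$-torsion-free, neither has any non-zero pseudo-null submodule, so Theorem~\ref{A} applied to $M$ gives that $M$ is $\Lambda_Z$-torsion-free, while the same theorem applied to $N$ tells us that $q(N)$ will be completely faithful as soon as we prove $N$ is $\Lambda_Z$-torsion-free. The theorem thus reduces to the following $K_0$-invariance claim: within $\Lambda_H$-torsion-free modules, $\Lambda_Z$-torsion-freeness depends only on the class in $K_0(\mathfrak{N}_H(G))$.

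The first concrete step is Lemma~\ref{W} applied to $[M]=[N]$, which produces short exact sequences
\[
0\to C\to K\to D\to 0, \qquad 0\to C\to L\to D\to 0
\]
in $\mathfrak{N}_H(G)$ together with an isomorphism $M\oplus K\cong N\oplus L$. Writing $t_Z(-)$ for the $\Lambda_Z$-torsion submodule and taking torsion of both sides of the direct-sum isomorphism (using $t_Z(M)=0$) gives
\[
t_Z(K)\;\cong\;t_Z(N)\oplus t_Z(L),
\]
so the remaining task is to force $t_Z(N)=0$.

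The hardest step, and the real content of the argument, is that the functor $t_Z(-)$ is only left exact, so the bare $K_0$-equality $[K]=[C]+[D]=[L]$ does not by itself imply $t_Z(K)\cong t_Z(L)$. My plan to overcome this is to manufacture an additive invariant $\phi\colon K_0(\mathfrak{N}_H(G))\to A$, with values in a suitable abelian group $A$, whose vanishing is equivalent to $\Lambda_Z$-torsion-freeness; promising candidates are an alternating sum of Grothendieck classes of Koszul or derived-functor groups associated with a regular sequence in the central subring $\Lambda_Z$, or a characteristic-ideal invariant attached to $t_Z(-)$. Once such a $\phi$ is at hand, additivity yields
\[
\phi(K)=\phi(C)+\phi(D)=\phi(L), \qquad \phi(M)+\phi(K)=\phi(N)+\phi(L),
\]
from which $\phi(N)=\phi(M)=0$, and hence $N$ is $\Lambda_Z$-torsion-free. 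A final application of Theorem~\ref{A} to $N$ then shows that $q(N)$ is completely faithful.
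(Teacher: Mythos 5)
Your overall strategy is the same as the paper's: reduce via Theorem~\ref{A} to showing $N$ is $\Lambda_Z$-torsion-free, and feed $[M]=[N]$ into Lemma~\ref{W} to get the two short exact sequences plus $M\oplus K\cong N\oplus L$. You also correctly identify the central obstruction, namely that $t_Z(-)$ is only left exact, so the $K_0$-relations alone do not control the torsion submodules.

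The problem is that the rest of the argument is a plan, not a proof. The entire mathematical content of the theorem resides precisely in the step where you write ``my plan is to manufacture an additive invariant $\phi$ whose vanishing is equivalent to $\Lambda_Z$-torsion-freeness''; you name ``promising candidates'' (Koszul Euler characteristics, characteristic ideals of $t_Z$) but never define one, never verify additivity on short exact sequences, and never prove the equivalence with torsion-freeness. There is also a latent circularity to be wary of: an additive $\phi\colon K_0(\mathfrak{N}_H(G))\to A$ with $\phi(M)=0$ if and only if $M$ is $\Lambda_Z$-torsion-free would, once it exists, immediately give the theorem, but establishing that such a $\phi$ exists is essentially equivalent to the claim you are trying to prove, so you cannot simply postulate it. The paper resolves this concretely and differently: it localizes at the prime $(0)$ of $\Lambda_H$ to turn each module $S$ into a finite-dimensional $Q(H)$-vector space $Q(S)$ carrying a residual $\Lambda_Z$-action, and then further localizes at each prime $P$ of $\Lambda_Z$. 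Exactness of localization makes $P\mapsto\dim_{Q(H)}Q(S)_P$ behave additively on short exact sequences, and the two auxiliary lemmas (that $Q(S)_P$ stays finite-dimensional, and that $\Lambda_Z$-torsion-freeness of a $\Lambda_H$-torsion-free module $S$ is equivalent to that of $Q(S)$) supply exactly the equivalence you were hoping to get from an abstract $\phi$. A further point you are missing: $\Lambda_Z$-torsion-freeness is checked elementwise, so one must control all $\Lambda_G$-submodules $N'\subseteq N$, not just $t_Z(N)$ of the whole module; the paper handles this by passing to $\Lambda_G$-submodules generated by candidate torsion elements and comparing $\dim Q(M)_P$ with $\dim Q(M)$. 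To repair your proposal you would need to carry out a construction of that kind rather than leave $\phi$ unspecified.
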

\begin{rem}\textnormal{ One can think of $G$ as $\Gamma_1$ which is the first inertia subgroup of $\textnormal{GL}_n(\mathbb{Z}_p)$ i.e.
\vspace{0.5cm}
\begin{center} $\Gamma_1 = \{ \gamma \in \textnormal{GL}_n(\mathbb{Z}_p) | \gamma \equiv 1 \mod (p)\}$
\end{center}
\vspace{0,5cm}
 \noindent In this case $G = Z \times H$ where $Z \simeq \mathbb{Z}_p$ is the centre of $G$ and $H$ is an open subgroup of $\textnormal{Sl}_n(\mathbb{Z}_p)$ that is normal in $G$.}
\end{rem}
 \subsection{Some well known facts in commutative algebra}
\label{comm}\noindent We will briefly mention some additional tools we use in order to prove \ref{CS}. First suppose $R$ is a commutative ring, the support of an $R$-module $M$ denoted by $\textnormal{Supp}_R(M)$ is the set of prime ideals, $P$ of $R$ such that the localized module $M_P \neq 0$. If $M$ is finitely generated then $\textnormal{Supp}_R(M)$ is exactly the set of prime ideals containing $\textnormal{Ann}_R(M)$. 
\newline \noindent  Note that $M$ is torsion-free if and only if $M$ has no $N \leq M$ $R$-submodule such that $\textnormal{Ann}_R(N) \neq 0$.
\newline \noindent We use the usual notation for the set of all prime ideals of a ring $R$ by Spec($R$).
\newline \noindent It is also well known that the nilradical is the set of nilpotent elements and also the intersection of all prime ideals of $R$.
\section{Theorem \ref{CS}}
\noindent Now we are ready to prove what we stated in section \ref{CS}.
\subsection{The proof Theorem \ref{CS}} 
\begin{proof} By Theorem \ref{A} it is enough to prove that $N$ is $\Lambda_Z$ torsion-free. In order to do that recall from \ref{comm} that it suffices to show that $\textnormal{Ann}_{\Lambda_Z}(N') = 0$ for all $N'$ $\Lambda_G$-submodule of $N$. It is because if there is a $\Lambda_Z$-submodule $N'$ of $N$ (which is naturaly a $\Lambda_Z$-module) with $\{n_1,\dots \}$ a set of generator system, then when we take the module $\overline{N'}$ generated by the same set of elements $\{n_1,\dots\}$ as a $\Lambda_G$-module, it is still in $N$, so it will be a $\Lambda_G$-submodule. If there is a non trivial element in $\textnormal{Ann}_{\Lambda_Z}(N')$, then it still annihilates all the elements of $\overline{N'}$, since $\Lambda_Z$ is central in $\Lambda_G$.
\newline \noindent First we show  that $\textnormal{Ann}_{\Lambda_Z}(N) = 0$. The $\Lambda_G$-module $N$ might not be finitely generated as a $\Lambda_Z$-module but it is still true that all $P \in \textnormal{Supp}_{\Lambda_Z}(N)$ contains $\textnormal{Ann}_{\Lambda_Z}(N)$, since if $N_P \not =0$ then there is an element of the generator system (we choose one, for example $\{n_1,n_2, \dots\}$), e.g. $n_1$ which is not zero after localizing, so $\bigcap \textnormal{Ann}(n_i) = \textnormal{Ann}(N) \subseteq \textnormal{Ann}(n_1) \subseteq P$. \newline \noindent It is known that the nilradical of $\Lambda_Z$ is zero. Moreover since $\Lambda_Z$ is a commutative noetherain maximal order, it is just a noetherian integrally closed domain, see \cite{AB} $3.6$.  The Nilradical is also the intersection of all prime ideals of $\Lambda_Z$, so it is enough to prove that $\textnormal{Supp}_{\Lambda_Z}(N) = \textnormal{Spec}(\Lambda_Z)$. 
\newline \noindent For that we localize in $\Lambda_H$ at the $(0)$ ideal. By Goldie's theorem we get a skewfield what we denote by $Q(H)$. If $S$ is a $\Lambda_G$-module from the category $\mathfrak{N}_H(G)$ then after localization we get a finite dimensional vector space, $Q(S)$ over $Q(H)$. Now recall that by Lemma \ref{W} we have short exact sequences \ref{1} such that $M \oplus K = N \oplus L$. Since localization is exact and $\Lambda_Z$ is central in $\Lambda_G$ the statement of Lemma \ref{W} is still true, so we still have the exact sequences and the equation of direct sums of finite dimensional $Q(H)$ vector spaces that are also $\Lambda_Z$-modules. Lets suppose indirectly that there is a $P \in \textnormal{Spec}(\Lambda_Z)$ such that $N_P = 0$. Localize with $P$ in $\Lambda_Z$. Since localization is exact and $\Lambda_Z$ is central in $\Lambda_G$ we get short exact sequences
\vspace{0.5cm}
\begin{center} 
$\begin{CD} 0 @>>> Q(C)_P @>>> Q(K)_P @>>> Q(D)_P @>>> 0 \\
0 @>>> Q(C)_P @>>> Q(L)_P @>>> Q(D)_P @>>> 0
\end{CD}$
\end{center}
\vspace{0,5cm}
such that $Q(M)_P \oplus Q(K)_P = Q(N)_P \oplus Q(L)_P$.
\label{CS2}\begin{lem}\textnormal{Let $Q(M)$ be a finite dimensional vectorspace over $Q(H)$ with a $\Lambda_Z$ action on it and let $P$ be an arbitrary prime ideal of $\Lambda_Z$. Then $Q(M)_P$ is also finite dimensional over $Q(H)$ where $Q(M)_P$ denotes the localized module of $Q(M)$ with $P$.} 
\end{lem}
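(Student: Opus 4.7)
My plan is to exploit the fact that $\Lambda_Z$ is central in $\Lambda_G$: this forces the $\Lambda_Z$-action on $V \defeq Q(M)$ to commute with the scalar $Q(H)$-action, so the action factors through a finite-dimensional commutative $Q(H)$-subalgebra of $\End_{Q(H)}(V)$, where the localization is easy to control.

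The first step I would carry out is to set up the relevant algebras. Since $\Lambda_Z$ commutes with $\Lambda_H$, it also commutes with the Ore localization $Q(H)$, whence the $\Lambda_Z$-action on $V$ is $Q(H)$-linear and is given by a ring homomorphism
\[ \rho : \Lambda_Z \longrightarrow \End_{Q(H)}(V) \cong M_n(Q(H)), \qquad n = \dim_{Q(H)} V. \]
I would then define $A \defeq \rho(\Lambda_Z)$ and let $B$ be the $Q(H)$-subalgebra of $M_n(Q(H))$ generated by $A$. Because $A$ is commutative and commutes with the scalar copy of $Q(H)$, the subalgebra $B$ equals the $Q(H)$-linear span of $A$, is commutative, and is finite-dimensional over $Q(H)$; in particular $B$ is a commutative Artinian ring. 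Moreover $V$ is naturally a $B$-module, finitely generated since it is finite-dimensional over $Q(H) \subseteq B$.

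The second step is to rewrite the localization in terms of $B$. As the $\Lambda_Z$-action on $V$ factors through $B$, the multiplicative set $S \defeq \Lambda_Z \setminus P$ acts through its image $\bar{S} \subseteq B$, and one obtains
\[ V_P = V \otimes_{\Lambda_Z} (\Lambda_Z)_P \cong V \otimes_B B_{\bar{S}}. \]
Decomposing the Artinian ring $B$ as a finite product of local Artinian rings $B = \prod_i B_i$, one checks that $B_{\bar{S}}$ is a subproduct of these factors: a given $B_i$ survives when $\bar{S}$ projects into the units of $B_i$, and is killed otherwise, since in a local Artinian ring every non-unit is nilpotent. Consequently $B_{\bar{S}}$ is again finite-dimensional over $Q(H)$.

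Finally, since $V$ is finitely generated over $B$, the base change $V \otimes_B B_{\bar{S}}$ is finitely generated over $B_{\bar{S}}$, and therefore finite-dimensional over $Q(H)$. I do not anticipate any genuine obstacle; the only subtle point is the initial verification that the $\Lambda_Z$- and $Q(H)$-actions commute, which is immediate from the centrality of $\Lambda_Z$ in $\Lambda_G$, after which the argument is essentially commutative-algebra bookkeeping about localizations of Artinian rings.
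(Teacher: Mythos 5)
Your approach is genuinely different from the paper's, but it contains a gap that I do not think can be papered over without essentially changing the argument.

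The central problem is that $Q(H)$ is \emph{not} commutative: it is the Goldie skewfield of fractions of $\Lambda_H$ where $H$ has split semisimple Lie algebra, and by Ardakov's theorem (cited in Section 4 of this paper) the centre of $Q(H)$ is merely $\mathbb{Q}_p$. Consequently $M_n(Q(H))$ is not a $Q(H)$-algebra, and the phrase ``the $Q(H)$-subalgebra $B$ of $M_n(Q(H))$ generated by $A$'' has no meaning as stated. If you instead interpret $B$ as the subring of $\End_{\mathbb{Z}}(V)$ generated by $A = \rho(\Lambda_Z)$ together with the left-multiplication operators $\lambda(Q(H))$ (which is what makes your ``commutes with the scalar copy of $Q(H)$'' remark true), then $B$ contains a copy of the noncommutative ring $Q(H)$ and is therefore \emph{not} commutative; the decomposition of $B$ as a finite product of commutative local Artinian rings collapses. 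Moreover, the claim that $B$, or even $A$ itself, is finite-dimensional over $Q(H)$ is unjustified: $A$ is a quotient of $\Lambda_Z$, and $\rho$ can perfectly well be injective (take $V=Q(H)$, so $\End_{Q(H)}(V)\cong Q(H)^{\mathrm{op}}$, and a $\Lambda_Z$-action whose image is an infinite commutative subring), in which case $A \cong \Lambda_Z$ is a two-dimensional local ring, not Artinian, and its $Q(H)$-span inside $\End_{\mathbb Z}(V)$ is infinite-dimensional over $\lambda(Q(H))$ since $\mathbb{Q}_p$-linearly independent right multiplications remain $\lambda(Q(H))$-linearly independent. In short, the reduction to ``localizations of a finite-dimensional commutative ring'' never gets off the ground.

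The paper's proof sidesteps all of this by working directly with the $Q(H)$-vector-space structure on $Q(M)$. The $S$-torsion submodule $Q(M)_S$ (with $S = \Lambda_Z \setminus P$) is a $Q(H)$-subspace because the two actions commute; the quotient $Q(M)/Q(M)_S$ is $S$-torsion-free and finite-dimensional, so each $s \in S$ acts injectively and hence bijectively on it; therefore the localization map identifies $Q(M)_P$ with $Q(M)/Q(M)_S$, which is a finite-dimensional $Q(H)$-space. That dimension-counting argument is the genuine content, and it is exactly what your plan would need to recover (e.g.\ by passing to composition factors of $V$ under the joint $Q(H)$- and $\Lambda_Z$-action and invoking Schur's lemma) once the commutativity and finiteness assumptions about $B$ are dropped.
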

\begin{proof} Let $Q(M)_S$ be the $S$-torsion submodule of $Q(M)$ where $S = \Lambda_Z \setminus P$. This is a $Q(H)$ subspace of $Q(M)$ since $\Lambda_Z$ is central and the set $S$ is multiplicately closed. Also $Q(M)_S \cdot S^{-1} = 0$. One can see that the quotient submodule $Q(M)/Q(M)_S$ is $S$ torsion-free since $s \cdot m \in Q(M)_S \Leftrightarrow \exists s_1 \in S$ such that $s_1\cdot s\cdot m = 0$ but that implies that $m \in Q(M)_S$. 
\newline \noindent We have a short exact sequence of $\Lambda_G$-modules
\begin{center}$\begin{CD} 0 @>>> Q(M)_S @>>> Q(M) @>>> Q(M)/Q(M)_S @>>> 0
\end{CD}$
\end{center}
\vspace{0,5cm} After localization with $P$ one can easily see that $Q(M) \cdot S^{-1} \simeq (Q(M)/Q(M)_S) \cdot S^{-1}$.
We will prove that $(Q(M)/Q(M)_S) \cdot S^{-1}  \simeq Q(M)/Q(M)_S$ as $Q(H)$ vector spaces. We can consider localization of a module as tensoring it by the localized ring so one can deduce that the elements of $(Q(M)/Q(M)_S) \cdot S^{-1}$ are of the form $(x \otimes \frac{1}{s})$ where $x \in Q(M)/Q(M)_S$ and $s \in S$.  Let us observe first that multiplication with and arbitrary element $s \in S$ is an injective linear transormation on the vector space $Q(M)/Q(M)_S$ hence it is an isomorphism so one can see that using the surjectivity property that every $x \in Q(M)/Q(M)_S$ can be written as $s \cdot y$ for some $y \in Q(M)/Q(M)_S$ so every element is of the form $(y \otimes 1)$ in $Q(M)/Q(M)_S \cdot S^{-1}$.  Since both $Q(M)$ and $Q(M)_S$ are finite dimensional, it follows that $Q(M)/Q(M)_S$ is also finite dimensional. Let $e_1, \dots, e_n$ be a basis in $Q(M)/Q(M)_S$. We have just seen that $(e_i \otimes 1)$ is a generating system for $Q(M)/Q(M)_S \cdot S^{-1}$. In order to see that they are linearly independent one easily see that if there is a linear combination of these is also of the form $(y \otimes 1)$, it is zero if and only if $y$ is $S$-torsion, but $Q(M)/Q(M)_S$ is $S$-torsion free. 
\end{proof}
\vspace{0.3cm} \noindent So all the the modules in the short exact sequences remain finite dimensional over $Q(H)$ after localization with $P$ by Lemma \ref{CS2}. Then by the indirect statement above, we see that $Q(L)_P = Q(M)_P \oplus Q(K)_P$ so we have 
\vspace{0.5cm}
\begin{center}$\begin{CD} 0 @>>> Q(C)_P @>>> Q(K)_P @>>> Q(D)_P @>>> 0 \\
0 @>>> Q(C)_P @>>>Q(M)_P \oplus Q(K)_P @>>> Q(D)_P @>>> 0
\end{CD}$
\end{center}
\vspace{0,5cm}
but that since these are finite dimensional vector spaces over $Q(H)$ we have that $Q(M)_P = 0$ which cannot be since $M$ is completely faithful. 
\newline \noindent So now we see that $\textnormal{Ann}_{\Lambda_Z}(N) = 0$. We are left to prove that $N$ has no $N'$ $\Lambda_G$-submodule such that $\textnormal{Ann}_{\Lambda_Z}(N') \neq 0$. Let us suppose that there is one. It means that there is a $P \in \textnormal{Spec}(\Lambda_Z)$ prime ideal such that $Q(N')_P =0$. By using Lemma \ref{W} again we see that $\textnormal{dim}_{Q(H)}Q(M)_P< \textnormal{dim}_{Q(H)}Q(M)$. It is because there is vector subspace that vanishes by the indirect statement. We need one more lemma.
\begin{lem}
\textnormal{  Let us suppose that a $\Lambda_G$-module $M$ is $\Lambda_H$-torsion free. $M$ is $\Lambda_Z$-torsion free if and only if $Q(M)$ is $\Lambda_Z$-torsion free.
}
\end{lem}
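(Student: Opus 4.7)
The plan is to exploit two structural facts: first, the assumption that $M$ is $\Lambda_H$-torsion free means the canonical map $M \to Q(M) = Q(H)\otimes_{\Lambda_H} M$ is injective, so $M$ sits inside $Q(M)$ as a $\Lambda_G$-submodule; second, because $\Lambda_Z$ lies in the centre of $\Lambda_G$, every element of $\Lambda_Z$ commutes with every element of $Q(H)$, so the $\Lambda_Z$-module structure on $M$ extends canonically to one on $Q(M)$ and the inclusion $M \hookrightarrow Q(M)$ is $\Lambda_Z$-linear. With these two observations in hand, both implications reduce to one-line verifications.

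The reverse implication is the easy one. If $Q(M)$ is $\Lambda_Z$-torsion free and $zm = 0$ for some non-zero $z \in \Lambda_Z$ and some $m \in M$, then the image of $m$ in $Q(M)$ is killed by $z$, so it is zero, and injectivity of $M \hookrightarrow Q(M)$ then forces $m = 0$.

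For the forward implication, suppose $M$ is $\Lambda_Z$-torsion free and pick a non-zero $z \in \Lambda_Z$ together with $q \in Q(M)$ satisfying $zq = 0$. Using the (left) Ore property of $\Lambda_H \setminus \{0\}$ inside $\Lambda_H$ that underlies Goldie's theorem, write $q = s^{-1}m$ with $s \in \Lambda_H\setminus\{0\}$ and $m \in M$. Centrality of $\Lambda_Z$ gives $z s^{-1} = s^{-1} z$ inside $Q(H)$, hence $0 = zq = s^{-1}(zm)$ in $Q(M)$, which forces $zm = 0$ already in $Q(M)$; the injection $M \hookrightarrow Q(M)$ then gives $zm = 0$ in $M$, and the $\Lambda_Z$-torsion-freeness of $M$ forces $m = 0$, so $q = 0$.

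I do not foresee any real obstacle: the only things that need to be said carefully are the existence of the Ore-form presentation $q = s^{-1}m$ inside $Q(M)$ and the commutation $zs^{-1} = s^{-1}z$, both of which are automatic from Goldie's theorem applied to the prime Noetherian ring $\Lambda_H$ and from $\Lambda_Z \subseteq Z(\Lambda_G)$ respectively. If one prefers to avoid Ore arithmetic altogether, the same argument can be phrased via the tensor description $Q(M) = Q(H)\otimes_{\Lambda_H} M$ and the fact that tensoring with the flat $\Lambda_H$-bimodule $Q(H)$ preserves injections, but the Ore presentation is the most transparent route.
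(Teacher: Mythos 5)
Your proof is correct and follows essentially the same route as the paper: both directions hinge on the injectivity of $M\hookrightarrow Q(M)$ (coming from $\Lambda_H$-torsion-freeness) and on centrality of $\Lambda_Z$ to move $z$ past denominators. The only difference is cosmetic -- you manipulate the Ore fraction $q=s^{-1}m$ directly, whereas the paper unwinds the localization equivalence relation explicitly via cross-multiplication.
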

\begin{proof} Let us suppose first that $Q(M)$ is $\Lambda_Z$-torsion free. We deduce that $M$ cannot have a $\Lambda_Z$-torsion part. It is because $\Lambda_Z$ is central in $\Lambda_G$, so if  $m$ is a $\Lambda_Z$-torsion element, all the elements $m/s$ are also torsion elements in $Q(M)$, so we get a $\Lambda_Z$-torsion part in $Q(M)$ (which is the localized submodule of the $\Lambda_Z$-torsion part of $M$).   
\newline \noindent The other direction can be proved by the following: Let us suppose that $M$ is $\Lambda_Z$-torsion free and indirectly that there is a $\Lambda_Z$-torsion part of $Q(M)$. That means that there exist elements in $Q(M)$ such that for each of them there is a $z \in \Lambda_Z$ such that $m/sz = 0$ in $Q(M)$. By the definition of localization we see that there are elements $s_1,s_2 \in \Lambda_H - \{ 0 \}$ such that $(mzs_1 - 0s)s_2 = 0$ in $M$. Hence $ms_1s_2z = 0$ ($\Lambda_Z$ is central), but $M$ is $\Lambda_H$-torsion free hence $z$ annihilates $ms_1s_2$, but that cannot be since $M$ is $\Lambda_Z$-torsion free.
\end{proof}
 \noindent Now we are ready to finish the proof. By Theorem $2.4.1$ (\ref{A}) we see that $M$ is $\Lambda_Z$-torsion free. Hence $Q(M)$ has the same property by the lemma above. The natural map $Q(M) \rightarrow Q(M)_P$ is therefore injective since the kernel of this map consists of $\Lambda_Z$-torsion elements in $Q(M)$. From the indirect statement we deduced that $\textnormal{dim}_{Q(H)}Q(M)_P< \textnormal{dim}_{Q(H)}Q(M)$, but that cannot happen by the injectivity we showed just now.
\end{proof}
\section{Theorem \ref{A}}
\subsection{Generalization of Theorem \ref{A}}
\noindent In \cite{Ar1} the author proved that whenever $G = Z \times H$ where $Z = \mathbb{Z}_p$ and $M$ is finitely generated $\Lambda_G$-module which has no non-zero pseudo-null submodule then $q(M)$ is completely faithful if and only if $M$ is $\Lambda_Z$ torsion-free. We will generalize it to $G =  Z_1 \times H_1$ where $Z_1 \simeq \mathbb{Z}_p$ and $H_1$ is such that its Lie-algebra is the direct product of a split semisimple and an abelian Lie-algebra. In this case $H_1 = \mathbb{Z}_p^n \times H$ where $H$ is such that its Lie-algebra is split semisimple. Let us denote $Z_2 = \mathbb{Z}_p^n$. We can choose topological generator for $\mathbb{Z}_p \simeq Z_1$ i.e. $g_1$ such that $<\overline{g_1}>\simeq \mathbb{Z}_p$. Now let $z = g_1 -1$. Let $\Lambda_Z$ be the Iwasawa algebra over the group $Z = Z_1 \times Z_2$.
\label{CS1}{\begin{thm}\textnormal{ Let $p \geq 5$. Let  $G = Z_1 \times H_1$ where $Z_1$ and $H_1$ as above. Let $M$ be a finitely generated torsion $\Lambda_G$-module such that it has no non-zero pseudo-null submodules. Then $q(M)$ is completely faithful if and only if $M$ is $\Lambda_Z$ torsion-free.}
\end{thm}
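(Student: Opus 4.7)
I would prove the two implications separately. The forward direction is a straightforward generalization of the standard argument; the reverse direction is where the generalization requires real work, and I would carry it out by adapting Ardakov's proof of Theorem \ref{A}.

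For the forward direction, I would argue by contradiction: if there exists $0\neq m\in M$ with $\lambda m=0$ for some $\lambda\in\Lambda_Z\setminus\{0\}$, then since $\Lambda_Z$ is central in $\Lambda_G$ the cyclic submodule $N=\Lambda_G m\subseteq M$ is also annihilated by $\lambda$. The hypothesis that $M$ has no nonzero pseudo-null submodule passes to $N$, so $N$ is not itself pseudo-null and $q(N)$ is a nonzero subquotient of $q(M)$. But $\lambda\in\mathrm{Ann}_{\Lambda_G}(N)\subseteq\mathrm{Ann}(q(N))$, contradicting complete faithfulness.

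For the hard direction I would follow the scheme of \cite{Ar1}. Set $z_0=g_1-1\in\Lambda_{Z_1}\subseteq\Lambda_Z$. Since $H_1=\mathbb{Z}_p^n\times H$ is a torsion-free compact $p$-adic analytic group, Brumer's theorem gives that $\Lambda_G/z_0\Lambda_G\simeq\Lambda_{H_1}$ is a noetherian domain, and Proposition \ref{P} then forces any proper c-ideal of $\Lambda_G$ containing $z_0$ to equal $z_0\Lambda_G$. Assuming for contradiction that $q(M)$ is not completely faithful, one extracts a submodule-subquotient $N$ of $M$ with nonzero $\Lambda_G$-annihilator; replacing that annihilator by its reflexive closure and passing to a minimal prime over it produces a nonzero prime c-ideal $P$ of $\Lambda_G$ that annihilates a suitable subquotient. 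Ardakov's key claim, which I would reprove in this setting, is that $z_0\in P$; once this is shown, Proposition \ref{P} forces $P=z_0\Lambda_G$, whence $M$ acquires a nonzero $\Lambda_{Z_1}$-torsion (hence $\Lambda_Z$-torsion) element, contradicting the hypothesis.

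The main obstacle is precisely this step of forcing $z_0\in P$. In the original setting of Theorem \ref{A}, Ardakov uses the fine structure of $\Lambda_H$ coming from the split semisimple Lie algebra of $H$ to control prime ideals; here the same control must be extended across the additional abelian factor $Z_2$. The observations I would use to push the argument through are: (i) $\Lambda_{H_1}=\Lambda_{Z_2}\,\hat{\otimes}\,\Lambda_H$ is still a noetherian domain with a well-controlled prime spectrum because it is a power series extension over $\Lambda_H$ in central variables; (ii) every element $g_{2,i}-1$ arising from a topological generator of a $\mathbb{Z}_p$-factor of $Z_2$ is central in $\Lambda_G$ with domain quotient, and lies in $\Lambda_Z$; and (iii) the stronger hypothesis that $M$ be $\Lambda_Z$-torsion-free (rather than merely $\Lambda_{Z_1}$-torsion-free) is exactly what is needed to rule out annihilators pointing in these additional central directions. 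With these ingredients I expect Ardakov's argument to go through essentially verbatim, treating the central variables of $Z_2$ on the same footing as $z_0$.
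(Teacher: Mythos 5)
Your forward direction is fine and essentially immediate from centrality of $\Lambda_Z$. The difficulty is the converse, and here there is a genuine gap in the plan. You propose to extract a prime $c$-ideal $P$ annihilating a subquotient and then \emph{force $z_0\in P$}, after which Proposition \ref{P} would give $P=z_0\Lambda_G$. That step is correct in Ardakov's original setting, where $Z=\mathbb{Z}_p$ is one-dimensional, but it is precisely what \emph{fails} once the abelian factor $Z_2$ is present: the nonzero intersection $P\cap\Lambda_Z$ need not contain $z_0$ (nor any $g_{2,i}-1$), and $P$ need not be principal at all. Your item (iii) acknowledges that $\Lambda_Z$-torsion-freeness rather than $\Lambda_{Z_1}$-torsion-freeness is needed, but this does not repair the stated claim. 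What can actually be proved, and what the paper proves (Proposition \ref{S}), is the weaker but sufficient statement that $P$ meets $\Lambda_Z$ (or $\Lambda_{H_1}$) non-trivially; the proof is a degree argument in $Q(H_1)[z]$ hinging on the computation that the centre of $Q(H_1)$ is $Q(Z_2)$, which in turn rests on the fact that the centre of the skew field of fractions of $\Lambda_H$ is $\mathbb{Q}_p$. Having a \emph{central} annihilating element in $\Lambda_Z$ is all one needs, and Proposition \ref{P} is not invoked at this stage.

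There is a second, softer issue: the step ``replacing the annihilator by its reflexive closure and passing to a minimal prime produces a prime $c$-ideal annihilating a suitable subquotient'' requires justification that you do not supply. The paper sidesteps this by invoking the Coates--Schneider--Sujatha decomposition $q(M)=M_0\oplus M_1$ (with $M_0$ completely faithful and $M_1$ locally bounded), together with their Lemma $4.3$(i), which guarantees that the annihilator of the bounded piece is already a prime $c$-ideal; the proof then identifies $q(X)$ with $M_1$, where $X$ is the $\Lambda_Z$-torsion submodule, giving both implications at once. Your by-contradiction route bypasses this machinery, so you would need to supply your own argument that the reflexive closure and minimal prime you construct still annihilate a nonzero subquotient of $q(M)$, and that the resulting ideal is indeed a $c$-ideal.
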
}
\label{P1}\begin{pro}\textnormal{Let $G$ be a torsionfree compact $p$-adic analytic group. Then $\Lambda_G$ is a maximal order.}
\end{pro}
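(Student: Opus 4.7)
The plan is to reduce to the case of a uniform pro-$p$ group via Lazard's structure theorem, and then to pass to general torsion-free $G$ by a crossed product argument. Since $G$ is a torsion-free compact $p$-adic analytic group, it contains an open normal uniform pro-$p$ subgroup $U$, and $\Lambda_G$ is naturally a crossed product $\Lambda_U\ast(G/U)$ with the finite group $G/U$. In particular, $\Lambda_G$ is a finitely generated free $\Lambda_U$-module on both sides, so the two rings share the same skew field of fractions up to a finite central simple extension.

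For the uniform case, I would equip $\Lambda_U$ with the Lazard filtration induced by the lower $p$-series of $U$. By Lazard's theorem the associated graded ring is the polynomial algebra $\mathbb{F}_p[X_1,\dots,X_d]$ in $d=\dim U$ variables, which is a commutative noetherian integrally closed domain and hence a maximal order in its field of fractions. Since the Lazard filtration is Zariskian in the sense of Li--Van Oystaeyen, standard lifting results for filtered rings allow the maximal-order property to ascend from $\textnormal{gr}\,\Lambda_U$ to $\Lambda_U$ itself, so $\Lambda_U$ is a maximal order in its skew field of fractions $Q(\Lambda_U)$.

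To pass from $\Lambda_U$ to $\Lambda_G$, I would extend the Lazard filtration from $\Lambda_U$ to $\Lambda_G$ via a set-theoretic section of the projection $G\to G/U$, verify that the resulting filtration on $\Lambda_G$ is still Zariskian, and observe that its associated graded ring is a finite extension of $\textnormal{gr}\,\Lambda_U$ which remains a commutative polynomial ring (essentially because the commutators $[g,u]$ for $g\in G$, $u\in U$ lie in deeper filtration degree than $g$ and $u$ themselves, so the $G/U$-conjugation action is trivial on the associated graded). The filtered lifting result then applies again, giving that $\Lambda_G$ is a maximal order. The main obstacle is precisely this last step: the interaction of the $G/U$-action with the Lazard filtration must be controlled carefully, because $|G/U|$ may be divisible by $p$ and so the usual Maschke-style arguments for crossed products with finite groups of invertible order do not apply in this setting.
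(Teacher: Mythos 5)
The paper gives no proof of this proposition; it simply cites [Ar1, Theorem 4.1]. Your filtered--graded strategy is a legitimate avenue (and it is essentially how the uniform case is handled in the literature), but the passage from the uniform open normal subgroup $U$ to $G$ has a genuine gap, which your closing sentence correctly flags as the ``main obstacle'' but does not overcome.

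Two concrete problems. First, ``extending the Lazard filtration via a set-theoretic section of $G\to G/U$'' is ill-suited: if the coset representatives are placed in filtration degree $0$, then the degree-zero part of $\textnormal{gr}\,\Lambda_G$ is a twisted group algebra of $G/U$ over $\mathbb{F}_p$, which contains nilpotents whenever $p\mid|G/U|$; so the associated graded is not even a domain and no lifting theorem can conclude that $\Lambda_G$ is a maximal order. The correct device is a genuine $p$-valuation $\omega$ on the whole group $G$ (which Lazard's theory supplies for torsion-free compact $p$-adic analytic $G$), under which \emph{every} group element gets a positive value, not an ad hoc extension from $\Lambda_U$. Second, your key assertion that ``the commutators $[g,u]$ lie in deeper filtration degree, so the $G/U$-conjugation action is trivial on the associated graded'' amounts to $\omega([g,h])>\omega(g)+\omega(h)$, which is strictly stronger than the defining inequality $\omega([g,h])\geq\omega(g)+\omega(h)$ of a $p$-valuation, and it fails already for $G=\mathbb{Z}_p\ltimes\mathbb{Z}_p$ with $aba^{-1}=b^{1+p}$: there $\omega([a,b])=\omega(a)+\omega(b)$ exactly. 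Thus $\textnormal{gr}_\omega G$ is a non-abelian graded Lie algebra over $\mathbb{F}_p[\pi]$ and $\textnormal{gr}\,\Lambda_G\cong U_{\mathbb{F}_p[\pi]}(\textnormal{gr}_\omega G)$ is \emph{not} a polynomial ring. To finish your argument you would need both a result that such enveloping algebras are maximal orders and a Zariskian lifting theorem valid with non-commutative associated graded; those ingredients exist in the literature but are not supplied here, so the crossed-product/Maschke worry you raise at the end is precisely where your sketch stalls. (Also, a minor point: in the uniform case $\textnormal{gr}\,\Lambda_U$ has $d+1$ generators, one of them being the image of $p$, not $d$.)
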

\proof{see \cite{Ar1} Theorem $4.1.$}
\vspace{0,3cm}
 \noindent  We will use two localization of $\Lambda_G$ in order to prove the next proposition. Let $S$ be the Ore set
\begin{center} $S = \{ s \in \Lambda_G \ | \  \textnormal{s is regular mod} \ P_H\}$
\end{center}
and let $T$ be another Ore set such that
\begin{center} $T =\{ t \in \Lambda_G \ | \ \Lambda_G/\Lambda_G \cdot t \ \textnormal{finitely generated over} \  \Lambda_{H_1}\}$
\end{center}
\vspace{0.3cm}
The definition of these two Ore sets can be found in \cite{Ar1} $3.1$ and in \cite{C} Definition $2$ respectively, altough both are the work of Venjakob. By Propositsion $2.6$ in \cite{C} one can identify $T$ to be those elements that are regular modulo $\mathcal{N}$  where \[ \mathcal{N} :=\textnormal{preimage of} \ \mathcal{N}(\Omega(G/J)) \ \textnormal{in} \ \Lambda_G\] and $J$ denotes an arbitrary pro-p subgroup of $H$ which is normal in $G$. For details see \cite{C} section $2$. \newline \noindent The special case of the next proposition can be found in \cite{Ar1}Theorem  $4.3$
\label{Q}\begin{pro}\textnormal{If $I$ is a   prime $c$-ideal of $\Lambda_G$ then it is $p \cdot \Lambda_G$ or $\Lambda_G/I$ is finitely genrated over $\Lambda_{H_1}$ }
\end{pro}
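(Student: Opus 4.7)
The plan is to follow the strategy Ardakov uses for his Theorem $4.3$ in \cite{Ar1}, pushing it through for the slightly larger group $H_1 = Z_2 \times H$ in place of $H$. The key tool is the Ore set $T$: I would argue by a dichotomy on whether or not $I$ meets $T$.

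If $I \cap T \neq \emptyset$, choose $t \in I \cap T$. Then $\Lambda_G / I$ is a $\Lambda_G$-quotient of $\Lambda_G / \Lambda_G \cdot t$, which by the very definition of $T$ is finitely generated over $\Lambda_{H_1}$. Thus $\Lambda_G / I$ is finitely generated over $\Lambda_{H_1}$ and the conclusion holds in this branch.

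Suppose instead $I \cap T = \emptyset$. I would show $p \in I$; once established, Proposition \ref{P} applied with $x = p$ forces $I = p \cdot \Lambda_G$, because $p$ is central in $\Lambda_G$ and $\Lambda_G / p\Lambda_G = \Omega_G$ is a domain under our torsion-freeness hypothesis on $G$ (as in the discussion leading to Proposition \ref{P1}). To see that $p \in I$, I pass to the localisation $(\Lambda_G)_T$ and use Proposition $2.6$ of \cite{C} to identify $T$ with the set of elements of $\Lambda_G$ that are regular modulo $\mathcal{N}$, the preimage in $\Lambda_G$ of the nilradical of $\Omega(G/J)$ for a pro-$p$ open subgroup $J$ of $H$ normal in $G$. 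Because $I \cap T = \emptyset$, the extension $I \cdot (\Lambda_G)_T$ is a proper prime $c$-ideal of the localised ring, by flatness of $\Lambda_G \hookrightarrow (\Lambda_G)_T$ combined with the extension/contraction behaviour of reflexive ideals recorded in the flatness proposition of section $2.2.1$. Reducing modulo $p$ gives a localisation of $\Omega_G$, whose structure is rigidly controlled by the split semisimple part of $H_1$: Ardakov's analysis of this mod-$p$ algebra in \cite{Ar1} shows that no non-zero prime $c$-ideal survives the localisation, so the only remaining option is $p \in I$.

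The principal obstacle, and essentially the only new ingredient compared with Ardakov's proof, is handling the abelian factor $Z_2$ in $H_1 = Z_2 \times H$. Ardakov's argument uses the semisimple Lie algebra of $H$ to pin down the two-sided ideals of $\Omega_H$; in our setting $\Lambda_{H_1} = \Lambda_{Z_2} \, \widehat{\otimes} \, \Lambda_H$ carries the extra commutative central factor $\Lambda_{Z_2}$. I would accommodate this by regrouping $G = (Z_1 \times Z_2) \times H$ so that $Z := Z_1 \times Z_2$ absorbs all the abelian directions, while weakening the conclusion of Ardakov's theorem from ``finitely generated over $\Lambda_H$'' to ``finitely generated over $\Lambda_{H_1}$''. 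The verification then reduces to checking step by step that Ardakov's arguments still go through with $\Lambda_{\mathbb{Z}_p}$ replaced by the bigger commutative regular local ring $\Lambda_Z$, and that no new prime $c$-ideal beyond $p \cdot \Lambda_G$ can appear along the way; this is the technical heart of the generalisation and where I expect the most care to be needed.
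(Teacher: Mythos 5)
Your first branch ($I \cap T \neq \emptyset$) is essentially the same as the paper's; you spell out why $\Lambda_G/I$ inherits finite generation over $\Lambda_{H_1}$ from $\Lambda_G/\Lambda_G t$, where the paper just invokes Proposition $2.3$ of \cite{C}. That part is fine.

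The second branch is where you diverge, and where the plan as stated does not close. You propose to localise at $T$ itself, pass to $(\Lambda_G)_T/p$, and then ``redo Ardakov's mod-$p$ analysis'' for the enlarged group $H_1 = Z_2 \times H$ --- and you candidly flag this as the technical heart you have not carried out. The paper avoids this entirely by a short observation you are missing: Venjakob's Ore set $S$ (elements regular modulo $P_H$) satisfies $S \subseteq T$. Consequently $I \cap T = \emptyset$ already forces $I \cap S = \emptyset$, and then Ardakov's Proposition $3.4$ of \cite{Ar1} applies \emph{verbatim} to the $S$-localisation $\Lambda_{G,H}$: it gives $I_S = p\cdot\Lambda_{G,H}$, hence $p \in I_S \cap \Lambda_G = I$, and Proposition \ref{P} with $x=p$ yields $I = p\cdot\Lambda_G$. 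In other words, the abelian factor $Z_2$ does not require you to revisit Ardakov's mod-$p$ structure theory at all, because the localisation is taken with respect to $S$ (which only sees the semisimple part $H$), not with respect to the larger set $T$. Your route, even if it could be pushed through, would be substantially more work; and as written, the crucial claim ``no non-zero prime $c$-ideal survives the localisation'' is an assertion about a new ring ($(\Lambda_G)_T/p$ for this bigger $G$) that Ardakov's paper does not directly cover, so it would need to be proved, not cited. Replacing that step with the inclusion $S \subseteq T$ and the appeal to \cite{Ar1} Prop.\ $3.4$ is the fix.
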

\begin{proof} First case: If $I \cap T = \emptyset$. It can be seen that $S \subseteq T$ so that means $I \cap S = \emptyset$ also. So by Propositsion $3.4$ in \cite{Ar1} the localized ideal $I_{S}$ in $\Lambda_{G, H}$ equals to $p \cdot \Lambda_{G, H}$, but it means that $p \in I = I_S \cap \Lambda_G$ so by Propositsion \ref{P} $I = p \cdot \Lambda_G$.
\newline \noindent Second case: If $I \cap T \neq \emptyset$. It means that $\Lambda_G / I$ is $T$-torsion in the sense of \cite{C} (section $2$). So by Propositsion $2.3$ in \cite{C} $\Lambda_G / I$ if finitely generated over $\Lambda_{H_1}$. 
\end{proof}
\noindent The next proposition is of crucial importance.
\label{R}\begin{pro}
\label{S}\textnormal{If $I$ is a prime $c$-ideal in $\Lambda_G$ such that $\Lambda_G/I$ is finitely generated over $\Lambda_{H_1}$. Then $\Lambda_Z \cap I \neq \emptyset$ or $I \cap \Lambda_{H_1} \neq \emptyset$.}
\end{pro}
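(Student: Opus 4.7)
The plan is to argue by contradiction, so assume $I\cap\Lambda_Z=0$ and $I\cap\Lambda_{H_1}=0$. Writing $\Lambda_G=\Lambda_{H_1}\bs z\js$ with $z=g_1-1$ central, the finite generation of $\Lambda_G/I$ over $\Lambda_{H_1}$ makes $z$ integral over the image of $\Lambda_{H_1}$, so there is a monic polynomial
\[ p(z)=z^n+a_{n-1}z^{n-1}+\cdots+a_0\in I\cap\Lambda_{H_1}[z] \]
of minimal positive degree $n$. Since $I\cap\Lambda_{H_1}=0$, the map $\Lambda_{H_1}\hookrightarrow\Lambda_G/I$ is injective, and the images of $1,z,\ldots,z^{n-1}$ generate $\Lambda_G/I$ as a left $\Lambda_{H_1}$-module (by Weierstrass-type division by the monic $p$).

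The heart of the argument is to show that every $a_i$ lies in $Z(\Lambda_{H_1})$. For any $h\in\Lambda_{H_1}$, the fact that $I$ is two-sided and $z$ is central gives
\[ hp(z)-p(z)h=\sum_{i<n}[h,a_i]\,z^i\in I, \]
an element of degree strictly less than $n$. If $1,z,\ldots,z^{n-1}$ are left $\Lambda_{H_1}$-linearly independent in $\Lambda_G/I$, this sum vanishes coefficient by coefficient and forces $[h,a_i]=0$, i.e.\ $a_i\in Z(\Lambda_{H_1})$. To establish the linear independence I would study the ascending chain of two-sided ideals $L_k\subseteq\Lambda_{H_1}$ whose members are the leading coefficients of degree-$k$ elements of $I\cap\Lambda_{H_1}[z]$. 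Minimality of $n$ gives $L_n=\Lambda_{H_1}$ and $L_{n-1}\subsetneq\Lambda_{H_1}$, and the question reduces to ruling out that $L_{n-1}$ is a nonzero proper two-sided ideal. This is the principal technical obstacle; I expect it to be handled by extending Ardakov's analysis from \cite{Ar1} Theorem $4.3$ of the two-sided ideals of $\Lambda_H$ to $\Lambda_{H_1}=\Lambda_{Z_2}\,\widehat{\otimes}\,\Lambda_H$, using Proposition \ref{P1} and the split semisimple hypothesis on $\mathrm{Lie}(H)$: any nonzero element of $L_{n-1}$ would eventually produce a nonzero element of either $I\cap\Lambda_{H_1}$ or $I\cap\Lambda_Z$, both forbidden.

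Once the coefficients have been pinned down to the centre, I invoke the standard computation $Z(\Lambda_H)=\mathbb{Z}_p$ for torsion-free $H$ with split semisimple Lie algebra (valid at $p\geq 5$); combined with $H_1=Z_2\times H$ this gives $Z(\Lambda_{H_1})=\Lambda_{Z_2}$. Hence $a_i\in\Lambda_{Z_2}$ for all $i$, and
\[ p(z)\in\Lambda_{Z_2}[z]\subset\Lambda_{Z_2}\bs z\js=\Lambda_Z, \]
producing a nonzero element of $I\cap\Lambda_Z$ and closing the contradiction. Specialising $Z_2$ to the trivial group recovers Ardakov's original setting, so the genuinely new content is the structural control of the abelian factor $\Lambda_{Z_2}$ inside $\Lambda_{H_1}$, which is where I expect the bulk of the technical work to sit.
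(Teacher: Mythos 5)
Your proposal correctly identifies what the argument should accomplish and gets the broad shape right, but the step you yourself flag as the ``principal technical obstacle'' --- ruling out a nonzero proper two-sided ideal $L_{n-1}$ of leading coefficients --- is left genuinely unresolved, and the suggested remedy (extending Ardakov's analysis of two-sided ideals in $\Lambda_H$ to $\Lambda_{H_1}$) is not how the paper closes the gap. The subtlety is real: once you fix $n$ to be the minimal degree of a \emph{monic} element of $I\cap\Lambda_{H_1}[z]$, nothing forbids $I$ from containing a nonzero \emph{non-monic} polynomial of strictly smaller degree, in which case $1,z,\ldots,z^{n-1}$ fail to be left $\Lambda_{H_1}$-independent in $\Lambda_G/I$ and your coefficient-by-coefficient reading of $hp(z)-p(z)h\in I$ gives nothing.

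The paper avoids this entirely by not insisting on monicity. It takes $n$ minimal with $I\cap A_n\neq 0$ (where $A_i=\bigoplus_{k\le i}\Lambda_{H_1}z^k$), so the polynomial $a=a_nz^n+\cdots+a_0\in I$ has $a_n\neq 0$ but need not be $1$, and --- crucially --- \emph{no} nonzero polynomial of degree $<n$ lies in $I$. It then passes to the skew field of fractions $Q(H_1)$ of $\Lambda_{H_1}$ and normalises to $a_n^{-1}a\in Q(H_1)[z]$, so the commutator $[u,a_n^{-1}a]$ with $u\in Q(H_1)$ has degree $<n$ and lies in the localised ideal; clearing denominators lands it back in $I\cap A_{n-1}=0$. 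Because $Q(H_1)$ is a division ring, the ``ideal of leading coefficients'' issue you were worried about simply cannot arise there; the minimality assumption does all the work with no further input about two-sided ideals of $\Lambda_{H_1}$. The conclusion $a_n^{-1}a_i\in Z(Q(H_1))=Q(Z_2)$ then replaces your identification $Z(\Lambda_{H_1})=\Lambda_{Z_2}$, and one clears $p$-denominators to get $p^m a_n^{-1}a\in\Lambda_Z$, using primeness of $I$ and centrality to place it in $I\cap\Lambda_Z$. So your route is salvageable only by importing exactly the localisation-to-$Q(H_1)$ step that the paper uses; without it the argument does not go through.
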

\begin{proof} The proof of this proposition is quite similar to that of in \cite{Ar1} Prop. $4.5$ with a little additional argument. We again have an increasing chain of finitely generated $\Lambda_{H_1}$-modules \[\Lambda_{H_1} = A_0 \subset A_1 \subset A_2 \dots \]
where $A_i = \bigoplus^{i}_{k = 0} \Lambda_{H_1}z^k$. The image of this chain in $\Lambda_G/I$ must stabilize by the Noetherian property of $\Lambda_G/I$ that we assumed. So again $I \cap A_n \neq 0$ for some $n$. Let us consider the minimal such $n$, if it is zero, then we are done, if it is not, then we have a non-zero polynomial \[ a = a_nz^n + \dots + a_0 \in I. \] By minimality of $n$ $a_n \neq 0$. Writing again $Q(H)$ for the skewfield of fractions of $\Lambda_{H_1}$ we consider the polynomial ring $Q(H_1)[z]$. Note that $Q(I \cap \Lambda_{H_1})$ is a two-sided ideal in $Q(H_1)[z]$ and $a_n^{-1}a \in Q(I  \cap \Lambda_{H_1}[z])$. Consider an element $u \in Q(H_1)$ and look at the commutator $[u,a_n^{-1}a]$. It has strictly smaller degree than $n$ and it is still in the ideal $Q(I \cap \Lambda_{H_1})$. So with clearing the common denominator we get an element which is in $I \cap A_{n-1}$, so it must be zero by minimality of $n$. It means that \[a_n^{-1}a_i \ Z(Q(H_1)) \]  for all $i < n$. Since $\Lambda_{Z_2}$ is central in $\Lambda_{H_1}$ and the center of $\Lambda_H$ is $\mathbb{Q}_p$ by Theorem $4.4$ in \cite{Ar1}, it follows that $a_n^{-1}a \in Q(Z_2)$ i.e. in the field of fractions of $\Lambda_{Z_2}$ therefore we can find an $m \geq 0$ such that $p^m a_n^{-1}a \in \Lambda_Z$. Now $I$ is a prime ideal, $a_n$ is not in $I$ and $p^m a = a_n (p^m a_n^{-1}) a Q \in I$, we see that $I \cap \Lambda_Z \neq \emptyset$. 
\end{proof}
\noindent Now we are ready to prove Theorem $4.1$. It is essentialy the same as the proof of the special version in \cite{Ar1} with a little modification.
\vspace{0.5cm}
\subsection{Proof of Theorem \ref{CS1}}
\begin{proof}First by Propositsion \ref{P1} $\Lambda_G$ is a maximal order. By Proposition $4.1.1.$ in \cite{CS} and the fact that $M$ is $\Lambda_G$ torsion, $q(M) = M_0 \oplus M_1$ where $M_0$ is completely faithful and $M_1$ is locally bounded. Let us consider $X \leq M$ the $\Lambda_Z$-torsion submodule of $M$. Like in \cite{Ar1} we show that $q(X) = M_1$. Since $\Lambda_G$ is noetherian we can find a finitely generated $\Lambda_G$-submodule $\mathcal{N}$ of $M$ such that $q(\mathcal{N}) = M_1$. Let $M_o$, $\mathcal{N}_o$ be the maximal pseudo-null submodule of the modules $M$ and $\mathcal{N}$. Since $\mathcal{N}_o \leq M_o = 0$, the annihilator of $M_1$ equals to the annihilator of $\mathcal{N}$. Since $M_1$ is locally bounded, $\mathcal{N}$ is a $\Lambda_G$-torsion, bounded object in $\textnormal{mod}(\Lambda_G)$. So by Lemma $4.3$ (i) in \cite{CS} $\textnormal{Ann}(q(\mathcal{N}))$ is a prime-c ideal. Therefore by Propositsion \ref{S} there is a central element $x \in \Lambda_{Z}$ such that $x \in \textnormal{Ann}(q(\mathcal{N}))$, so $x \Lambda_G \subseteq \textnormal{Ann}(q(\mathcal{N}))$ but it means that $\mathcal{N} \subseteq X$. Hence $M_1 = q(\mathcal{N}) \subseteq q(X)$ but $q(X) \subseteq M_1$ is true also since $\Lambda_Z$ is central. Of course $q(M)$ is completely faithful if and only if $M_1 = 0$ but now we see that it happens if and only if $\mathcal{N} = 0$ since $\mathcal{N}$ has no non-zero pseudo-null submodule.
\end{proof}
\vspace{1.5cm}

\end{document}